\numberwithin{equation}{section} \textwidth=17.5cm
 \newtheorem{thm}{Theorem}[section]
 \newtheorem{lem}[thm]{Lemma}
 \theoremstyle{definition}
 \theoremstyle{remark}
 \newtheorem{rem}[thm]{Remark}
 \numberwithin{equation}{section}
\numberwithin{equation}{section}
\begin{document}

\title[$2$-Local derivations on von Neumann algebras] {$2$-Local derivations
 on von Neumann algebras}

\author{Shavkat Ayupov}

\address{%
Dormon yoli 29 \\
Institute of
 Mathematics  \\
 National University of
Uzbekistan\\
 100125  Tashkent\\   Uzbekistan\\
 and
 the Abdus Salam \\
 International Centre\\
 for Theoretical Physics (ICTP)\\
  Trieste, Italy}

\email{sh$_{-}$ayupov@mail.ru}

\author{Karimbergen Kudaybergenov}

\address{%
Ch. Abdirov 1 \\
Department of Mathematics\\
 Karakalpak state university \\
 Nukus
230113, Uzbekistan}

 \email{karim2006@mail.ru}

\begin{abstract} The paper is devoted to the description of $2$-local derivations
on von Neumann algebras. Earlier it was proved that every
$2$-local  derivation on a semi-finite von Neumann algebra is a
derivation. In this paper, using the analogue of Gleason Theorem
for signed measures, we extend this result to type $III$ von
Neumann algebras. This implies that on arbitrary
 von Neumann algebra each $2$-local derivation is a derivation.

{\it Keywords:} von Neumann algebra; derivation; $2$-local
derivation.
\\

\end{abstract}

\maketitle

\section{Introduction}

Given an algebra $\mathcal{A},$ a linear operator
$D:\mathcal{A}\rightarrow \mathcal{A}$ is called a
\textit{derivation}, if $D(xy)=D(x)y+xD(y)$ for all $x, y\in
\mathcal{A}$ (the Leibniz rule). Each element $a\in \mathcal{A}$
implements a derivation $D_a$ on $\mathcal{A}$ defined as
$D_a(x)=[a, x]=ax-xa,$ $x\in \mathcal{A}.$ Such derivations $D_a$
are said to be \textit{inner derivations}.  Recall that a  map
$\Delta:\mathcal{A}\rightarrow\mathcal{A}$  (not linear in
general) is called a
 $2$-\emph{local derivation} if  for every $x, y\in \mathcal{A},$  there exists
 a derivation $D_{x, y}:\mathcal{A}\rightarrow\mathcal{A}$
such that $\Delta(x)=D_{x, y}(x)$  and $\Delta(y)=D_{x, y}(y).$

The notion of $2$-local derivations it was  introduced in 1997 by
P. \v{S}emrl \cite{S} and in this paper he described $2$-local
derivations on the algebra $B(H)$ of all bounded linear operators
on the infinite-dimensional separable Hilbert space $H.$ A similar
description for the finite-dimensional case appeared later in
\cite{KK}. In the paper \cite{LW} $2$-local derivations have been
described on matrix algebras over finite-dimensional division
rings.

In \cite{AK} we suggested a new technique and have generalized the
above mentioned results of \cite{KK} and \cite{S} for arbitrary
Hilbert spaces. Namely we considered $2$-local derivations on the
algebra $B(H)$ of all  bounded linear operators on an arbitrary (no
separability is assumed) Hilbert space $H$ and proved that every
$2$-local derivation on $B(H)$ is a derivation. A similar result
for $2$-local derivations on  finite von Neumann algebras  was obtained in
\cite{AKNA}. Finally, in \cite{AA2} the authors extended all above results
and give a short proof of this result for arbitrary semi-finite
von Neumann algebras.

In the present paper we prove that for  any purely infinite von
Neumann algebra $M$ every $2$-local derivation on $M$ is a
derivation. This completes the solution of the above problem for
arbitrary von Neumann algebras. Our proof is essentially based on
the analogue of Gleason theorem for signed measures on projection
of von Neumann algebras.

\medskip

\section{Main result}

\medskip

It is known \cite{Sak} that any derivation $D$ on a von Neumann
algebra $M$ is an inner derivation, that is there exists an
element $a\in M$ such that
$$
D(x)=D_a(x) = ax-xa
$$
for all $x\in M.$ Therefore for a von Neumann algebra $M$ the
above definition of $2$-local derivation is equivalent to the
following one: A map $\Delta : M\rightarrow M$ is called a
$2$-local derivation, if for any two elements $x$, $y\in M$ there
exists an element $a\in M$ such that
$$
\Delta(x)=ax-xa,\, \Delta (y)=ay-ya.
$$
If  $\Delta :M\rightarrow M$ is  a $2$-local derivation, then from
the definition it easily follows that $\Delta$ is homogenous. At
the same time,
\begin{equation}\label{joor}
 \Delta(x^2)=\Delta(x)x+x\Delta(x)
\end{equation}
for each $x\in M.$

In \cite{Bre} it is proved that any Jordan derivation (i.e. a linear map satisfying
the above equation) on a semi-prime algebra is a derivation.
Since every von Neumann algebra  is semi-prime, in the
case of von Neumann algebras in order  to prove that a $2$-local derivation
$\Delta :M\rightarrow M$ is a derivation it is sufficient to prove
that $\Delta:M\rightarrow M$ is additive.

The following theorem is the main result of the paper.

\begin{thm}\label{main}
Let $M$ be an arbitrary von Neumann algebra. Then any $2$-local
derivation $\Delta: M\rightarrow M$ is a derivation.
\end{thm}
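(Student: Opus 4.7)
My plan is to reduce first to the type $III$ case, appealing to \cite{AA2} for the semi-finite part, and then to establish additivity of $\Delta$ through the analogue of Gleason's theorem for signed measures on projections of a von Neumann algebra without a type $I_{2}$ direct summand.

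First I would decompose $M=M_{1}\oplus M_{2}$ into its semi-finite and type $III$ parts. Using the central projections $z_{1},z_{2}$ of this decomposition together with the inner-derivation theorem \cite{Sak} (which forces the witnessing inner derivation $D_{a}$ appearing in each $2$-local pair to respect $z_{i}$), I would check that $\Delta$ leaves each $M_{i}$ invariant and restricts to a $2$-local derivation there. The $M_{1}$ piece is handled by \cite{AA2}, so from then on I may assume $M$ is of type $III$; crucially, such an $M$ has no type $I_{2}$ direct summand, which is exactly the hypothesis under which the Gleason-type extension theorem is available on the projection lattice $P(M)$.

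Next, by the reduction recorded in the excerpt (homogeneity plus the Jordan identity (\ref{joor}) plus \cite{Bre}), it suffices to prove that $\Delta$ is additive. Since in a properly infinite algebra the self-adjoint part is the norm-closed real linear span of its projections, additivity will follow once I show that, for every $\varphi\in M^{\ast}$, the set function $\mu_{\varphi}(p):=\varphi(\Delta(p))$ on $P(M)$ is a bounded finitely additive signed measure. Gleason's theorem then extends $\mu_{\varphi}$ uniquely to a bounded linear functional on $M$, and by combining homogeneity with (\ref{joor}) and spectral theory I would verify that this extension agrees with $\varphi\circ\Delta$ on all of $M$. Letting $\varphi$ range over $M^{\ast}$ then delivers linearity of $\Delta$.

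The hard part will be verifying finite additivity of $\mu_{\varphi}$: for mutually orthogonal projections $p,q$ one needs $\Delta(p+q)=\Delta(p)+\Delta(q)$. I plan to attack this by applying $2$-locality to the pairs $(p,q)$, $(p,p+q)$ and $(q,p+q)$, producing elements $a,b,c\in M$ with
\[
\Delta(p)+\Delta(q)=[a,p+q],\ \ \Delta(p+q)-\Delta(p)=[b,q],\ \ \Delta(p+q)-\Delta(q)=[c,p],
\]
and then combining these commutator identities with the Jordan relation (\ref{joor}) and the abundance of equivalent subprojections in a type $III$ algebra to force the three witnesses to agree on the relevant corners. A secondary technical point is uniform norm control of the assignment $p\mapsto\Delta(p)$, which is needed for boundedness of $\mu_{\varphi}$; this should follow from a uniform-boundedness argument applied to the commutator representations provided by $2$-locality.
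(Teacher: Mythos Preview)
Your reduction to type $III$ and your use of the Gleason-type theorem to get additivity on $M_{sa}$ are exactly what the paper does. But there is a genuine gap at the step where you assert that the Gleason extension of $\mu_{\varphi}$ ``agrees with $\varphi\circ\Delta$ on all of $M$'' by ``combining homogeneity with (\ref{joor}) and spectral theory.'' Spectral theory and Lemma~\ref{masas}-type arguments only let you match on normal elements; for an arbitrary $x=x_{1}+ix_{2}$ there is no a priori reason why $\Delta(x)=\Delta(x_{1})+i\Delta(x_{2})$, and the Jordan identity (\ref{joor}) alone does not force this. So what your outline actually establishes is additivity of $\Delta$ on $M_{sa}$, not on $M$, and from that you can only conclude that $\Delta|_{M_{sa}}=D_{a}|_{M_{sa}}$ for some $a\in M$ (this is the paper's Lemma~\ref{jor}). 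The passage from $M_{sa}$ to $M$ is the bulk of the paper: one replaces $\Delta$ by $\Delta-D_{a}$ and then shows, through a sequence of lemmata (Lemmata~\ref{supp}--\ref{nul}) using partial isometries between $p$ and $\mathbf{1}-p$, an invertibility trick, and the fact that in type $III$ every nonzero projection is properly infinite, that a $2$-local derivation vanishing on $M_{sa}$ must vanish identically. Nothing in your proposal covers this part.

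A secondary remark: your attack on finite additivity $\Delta(p+q)=\Delta(p)+\Delta(q)$ via three witnessing elements $a,b,c$ is harder than necessary. The paper's device (Lemma~\ref{masas}) is to observe that $p,q\in\mathcal{W}^{\ast}(g)$ for $g=p+\tfrac{1}{2}q$, so a single $2$-local witness for $g$ already implements $\Delta$ linearly on all of $\mathcal{W}^{\ast}(g)$, giving finite additivity immediately. Boundedness of the resulting signed measure is then quoted from the literature in the infinite case, rather than obtained by any uniform-boundedness argument.
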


For the proof of this theorem we need several lemmata.

For a self-adjoint subset $S\subseteq M$ denote by $S'$ is the
commutant of $S,$ i.e.
$$
S'=\{y\in B(H): xy=yx, \forall\, x\in S\}.
$$

\begin{lem}\label{masas}
Let $g\in M$ be a self-adjoint element and let
$\mathcal{W}^\ast(g)=\{g\}''$ be the abelian von Neumann algebra
generated by the element $g.$ Then  there exists an element $a\in M$
such that
\begin{equation*}\label{spat}
\Delta(x)=ax-xa
\end{equation*}
for all $x\in \mathcal{W}^\ast(g).$  In particular, $\Delta$ is
additive on $\mathcal{W}^\ast(g).$
\end{lem}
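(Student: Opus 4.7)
The plan is to use the $2$-local property twice and then exploit the defining property of the bicommutant, namely that $\mathcal{W}^\ast(g)=\{g\}''=(\{g\}')'$, so that elements of $\mathcal{W}^\ast(g)$ automatically commute with every element of $\{g\}'$.

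First I would fix an element $a\in M$ once and for all, obtained by applying the $2$-local property to the pair $(g,g)$ (or to any pair involving $g$): there exists $a\in M$ with $\Delta(g)=ag-ga$. The claim is that this single $a$ implements $\Delta$ throughout $\mathcal{W}^\ast(g)$.

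To verify this, let $x\in\mathcal{W}^\ast(g)$ be arbitrary. Apply the $2$-local property to the pair $(g,x)$ to obtain an element $b\in M$ such that
\[
\Delta(g)=bg-gb\quad\text{and}\quad\Delta(x)=bx-xb.
\]
Comparing the two expressions for $\Delta(g)$ gives $(a-b)g=g(a-b)$, i.e.\ $a-b\in\{g\}'$. Since by definition $x\in\mathcal{W}^\ast(g)=\{g\}''$, $x$ commutes with every element of $\{g\}'$, and in particular $(a-b)x=x(a-b)$. Therefore
\[
ax-xa=bx-xb=\Delta(x),
\]
as required. Additivity of $\Delta$ on $\mathcal{W}^\ast(g)$ then follows immediately, since the inner derivation $y\mapsto ay-ya$ is linear.

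There is essentially no obstacle in this lemma; the only substantive ingredient is the bicommutant identity $\mathcal{W}^\ast(g)=(\{g\}')'$, and the key mechanism is the subtraction trick that isolates the ambiguity $a-b$ of the $2$-local implementer in $\{g\}'$, where $\mathcal{W}^\ast(g)$ absorbs it. The serious work of the paper clearly lies further downstream, where one must promote this spatial representation from a singly-generated abelian subalgebra to all of $M$ in the type $III$ case using the Gleason-type theorem for signed measures on projections that is advertised in the introduction.
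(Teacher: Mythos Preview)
Your argument is correct and matches the paper's proof essentially line for line: fix $a$ with $\Delta(g)=[a,g]$, take $b$ from the $2$-local property for the pair $(g,x)$, observe $a-b\in\{g\}'$, and conclude $[a,x]=[b,x]=\Delta(x)$ via the bicommutant identity. The only cosmetic difference is that the paper phrases the key containment as $b-a\in\{g\}'=\{g\}'''=\mathcal{W}^\ast(g)'$, which is the same observation you make.
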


\begin{proof}  By the definition there exists an element $a\in M$
(depending on $g$) such that
$$
\Delta(g)=ag-ga.
$$
Let us show that $\Delta(x)=[a, x]$ for all $x\in
\mathcal{W}^\ast(g).$ Let $x\in \mathcal{W}^\ast(g)$ be an
arbitrary element. There exists an element $b\in M$ such that
$$
\Delta(g)=[b, g],\,  \Delta(x)=[b, x].
$$
Since
$$
[a, g]=\Delta(g)=[b, g],
$$
 we get
 $$
 (b-a)g=g(b-a).
 $$
  Thus
$$
b-a\in \{g\}'=\{g\}'''=\mathcal{W}^\ast(g)',
$$
i.e. $b-a$ commutes with any element from $\mathcal{W}^\ast(g).$
Therefore
$$
\Delta(x)=[b, x]=[b-a, x]+[a, x]=[a, x]
$$
for all $x\in \mathcal{W}^\ast(g).$ The proof is complete.
\end{proof}

In the following two lemmata we suppose that  $M$ is an \textit {infinite} von Neumann algebra.

\begin{lem}\label{addi}
The restriction
 $\Delta|_{M_{sa}}$  of the $2$-local
 derivation $\Delta$ onto the set $M_{sa}$ of all self-adjoint
 of $M$ is additive.
\end{lem}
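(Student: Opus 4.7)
The plan is to verify additivity of $\Delta|_{M_{sa}}$ by testing against arbitrary bounded functionals and applying the Gleason-type theorem for signed measures on projections announced in the introduction.

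First I would show that $\Delta(p+q)=\Delta(p)+\Delta(q)$ for any two orthogonal projections $p,q\in M$. Since $pq=qp=0$, the self-adjoint element $g=p+2q$ satisfies $g^{2}=p+4q$, so $p$ and $q$ are polynomials in $g$ and in particular lie in $\mathcal{W}^{\ast}(g)$. Lemma \ref{masas} applied to $g$ then supplies a single $a\in M$ with $\Delta(w)=[a,w]$ for all $w\in\mathcal{W}^{\ast}(g)$, which yields the claimed additivity. By induction $\Delta$ is additive on any finite sum of pairwise orthogonal projections.

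Next, for each $\omega\in M^{\ast}$ define $\mu_{\omega}$ on the projection lattice of $M$ by $\mu_{\omega}(p)=\omega(\Delta(p))$. The preceding step shows $\mu_{\omega}$ is finitely additive on orthogonal projections. Since $M$ is infinite, it has no type $I_{2}$ direct summand, and so the analogue of Gleason's theorem for signed measures on projections applies: $\mu_{\omega}$ is automatically bounded and extends uniquely to a bounded linear functional $\widetilde{\mu}_{\omega}\in M^{\ast}$.

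The third step is to match $\widetilde{\mu}_{\omega}$ with $\omega\circ\Delta$ on all of $M_{sa}$. Given $x\in M_{sa}$, Lemma \ref{masas} provides $a\in M$ with $\Delta(w)=[a,w]$ for every $w\in\mathcal{W}^{\ast}(x)$. Approximating $x$ uniformly by simple combinations $x_{n}=\sum_{i}\lambda_{i}^{(n)}p_{i}^{(n)}$ of pairwise orthogonal spectral projections of $x$, one computes
\[
\omega(\Delta(x_{n}))=\sum_{i}\lambda_{i}^{(n)}\omega([a,p_{i}^{(n)}])=\sum_{i}\lambda_{i}^{(n)}\mu_{\omega}(p_{i}^{(n)})=\widetilde{\mu}_{\omega}(x_{n}),
\]
and passing to the norm limit (both $\omega$ and $\widetilde{\mu}_{\omega}$ being norm-continuous) gives $\omega(\Delta(x))=\widetilde{\mu}_{\omega}(x)$. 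Consequently, for $x,y\in M_{sa}$,
\[
\omega(\Delta(x+y))=\widetilde{\mu}_{\omega}(x+y)=\widetilde{\mu}_{\omega}(x)+\widetilde{\mu}_{\omega}(y)=\omega(\Delta(x)+\Delta(y)),
\]
and since $M^{\ast}$ separates points of $M$, the additivity of $\Delta|_{M_{sa}}$ follows.

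I expect the central difficulty to be the Gleason step: establishing that $\mu_{\omega}$ is automatically bounded, since the witnesses $a_{p}$ in $\Delta(p)=[a_{p},p]$ depend on $p$ and no uniform norm estimate is immediate from the $2$-local axiom alone. The uniform bound and the subsequent linear extension come jointly from the no-$I_{2}$ hypothesis, which is precisely where the assumption that $M$ is infinite enters in an essential way.
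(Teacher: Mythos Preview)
Your proposal is correct and follows essentially the same route as the paper: finite additivity on orthogonal projections via Lemma~\ref{masas}, boundedness and Gleason-type extension of the resulting signed measures $\omega\circ\Delta|_{P(M)}$, and a match on all of $M_{sa}$ using that $\Delta$ is linear on each $\mathcal{W}^\ast(x)$. The only cosmetic differences are that the paper works with the predual $M_\ast$ instead of $M^\ast$, takes $g=e_1+\tfrac{1}{2}e_2$ rather than $g=p+2q$, and phrases the matching step as uniqueness of the linear extension on $\mathcal{W}^\ast(x)$ rather than explicit spectral approximation.
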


\begin{proof}

Let $P(M)$ denote the lattice of all projections of the von
Neumann algebra $M$. Recall that a map $m:P(M)\rightarrow
\mathbb{C}$ is called \textit{a signed measure (or charge)} if
$m(e_1 +e_2) = m(e_1) + m(e_2)$ for arbitrary orthogonal
projections $e_1, e_2$ in $M$. A signed measure $m$ is said to be
\textit{bounded} if $\sup\{|m(e)|: e\in P(M)\}$ is finite.

Firstly, let us  show that for each  $f\in M_\ast$ the restriction
$f\circ \Delta|_{P(M)}$ of the superposition $f\circ \Delta(x) = f(\Delta(x)), x\in M$, onto the lattice $P(M)$ is a bounded
signed measure, where  $M_\ast$ is the predual space of  $M$ identified with the space of all normal
linear functionals on the von Neumann algebra $M.$

Let  $e_1, e_2$ be orthogonal projections in  $M.$ Put
$g=e_1+\frac{\textstyle 1}{\textstyle 2}e_2.$ By Lemma~\ref{masas}
there exists an element $a\in M$ such that  $\Delta(x)=ax-xa$ for
all $x\in \mathcal{W}^\ast(g).$ Since  $e_1, e_2\in
\mathcal{W}^\ast(g),$ we obtain that
$$
f\circ \Delta(e_1+e_2)=f([a, e_1+e_2])=f([a, e_1])+f([a,
e_2])=f\circ \Delta(e_1)+f\circ \Delta(e_2),
$$
i.e.  $ f\circ \Delta$ is a signed measure. Taking into account
that  $M$ is infinite,  from \cite[Theorem 29.5 and Theorem 30.08]{She} we obtain that the
signed measure $f\circ \Delta$ is bounded.

Now we can show that  $\Delta|_{M_{sa}}$ is additive.

Since  $f\circ \Delta|_{P(M)}$ is a bounded signed measure on
$P(M),$ by Gleason Theorem for signed measures (see \cite[Theorem
30.10]{She},  \cite[Theorem B]{Bun}) there exists a unique bounded
linear functional  $\widetilde{f}$ on $M$ such that
$\widetilde{f}|_{P(M)}=f\circ \Delta|_{P(M)}.$ Let us show that
$\widetilde{f}|_ {M_{sa}}=f\circ \Delta|_{M_{sa}}.$ Take an
arbitrary element  $x\in M_{sa}.$  By Lemma~\ref{masas} there
exists an element $a\in M$ such that  $\Delta(y)=ay-ya$ for all
$y\in \mathcal{W}^\ast(x).$ In particular,  $\Delta$ is linear on
 $\mathcal{W}^\ast(x),$ and therefore  $f\circ
\Delta|_{\mathcal{W}^\ast(x)}$ is a bounded linear functional
which is an extension of the measure  $f\circ
\Delta|_{P(\mathcal{W}^\ast(x))}.$ By the uniqueness of the extension
we have $\widetilde{f}(x)=f\circ\Delta(x).$ So
$f\circ\Delta|_{M_{sa}}$ is a bounded linear functional for all
$f\in M_\ast.$ This means, in particular, that
$$
f(\Delta(x+y)) = f(\Delta(x)) + f(\Delta(y)) = f(\Delta(x) +
\Delta(y)),
$$
 i.e.
 $f(\Delta(x+y) - \Delta(x)-\Delta(y)) = 0$ for
all $f\in M_ \ast$.
 Since  $M_\ast$ separates points on $M$ it follows that
  $\Delta(x+y) - \Delta(x)-\Delta(y) = 0$ for all $x,y \in M_{sa}$,
i.e.  $\Delta|_{M_{sa}}$  is additive. The proof is complete.
\end{proof}

\begin{lem}\label{jor} There exists an element  $a\in M$  such that
 $\Delta(x)=D_a(x) = ax-xa$ for all
$x\in M_{sa}.$
\end{lem}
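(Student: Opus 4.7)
The plan is to leverage Lemma~\ref{addi} to upgrade $\Delta|_{M_{sa}}$ from a merely additive map into an $\mathbb{R}$-linear Jordan derivation on $M_{sa}$, then complexify it to obtain a Jordan derivation on all of $M$, and finally invoke the theorems of \cite{Bre} and \cite{Sak} to deduce that this Jordan derivation is inner.

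More precisely, I would first note that by Lemma~\ref{addi} the restriction $\Delta|_{M_{sa}}$ is additive, and since any $2$-local derivation is homogeneous (in particular $\mathbb{R}$-homogeneous), $\Delta|_{M_{sa}}$ is $\mathbb{R}$-linear. I would then define a $\mathbb{C}$-linear extension $\widetilde{\Delta}:M\to M$ via the Cartesian decomposition, setting
$$
\widetilde{\Delta}(x+iy)=\Delta(x)+i\Delta(y)\qquad (x,y\in M_{sa}).
$$
By construction $\widetilde{\Delta}$ agrees with $\Delta$ on $M_{sa}$, and it remains only to verify that $\widetilde{\Delta}$ is a Jordan derivation on $M$. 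Applying (\ref{joor}) to $x+y$ with $x,y\in M_{sa}$ and using additivity of $\Delta$ on $M_{sa}$ yields the polarized Jordan identity
$$
\Delta(xy+yx)=\Delta(x)y+y\Delta(x)+x\Delta(y)+\Delta(y)x
$$
for all $x,y\in M_{sa}$. A direct expansion of $(x+iy)^2 = x^2-y^2+i(xy+yx)$, combined with (\ref{joor}) applied to the self-adjoint squares and with the polarized identity above, then gives
$$
\widetilde{\Delta}\bigl((x+iy)^2\bigr)=\widetilde{\Delta}(x+iy)(x+iy)+(x+iy)\widetilde{\Delta}(x+iy),
$$
so $\widetilde{\Delta}$ is a Jordan derivation on $M$.

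Since every von Neumann algebra is semi-prime, the result of \cite{Bre} upgrades $\widetilde{\Delta}$ to an (associative) derivation on $M$, and Sakai's theorem \cite{Sak} then produces an element $a\in M$ with $\widetilde{\Delta}(z)=az-za$ for every $z\in M$. Restricting to $M_{sa}$ and using $\widetilde{\Delta}|_{M_{sa}}=\Delta|_{M_{sa}}$ yields the claim. The main point — and the only nontrivial computation — is the verification that the complex-linear extension $\widetilde{\Delta}$ inherits the Jordan identity on all of $M$; everything else is a direct application of the preceding lemma together with the classical theorems of Bre\v{s}ar and Sakai.
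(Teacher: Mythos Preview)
Your proof is correct and follows essentially the same route as the paper's own argument: both define the complex-linear extension $\widetilde{\Delta}(x+iy)=\Delta(x)+i\Delta(y)$, use homogeneity, Lemma~\ref{addi} and \eqref{joor} to see it is a Jordan derivation, and then apply \cite{Bre} and \cite{Sak}. The only difference is that you spell out the polarization step verifying the Jordan identity for $\widetilde{\Delta}$, which the paper leaves implicit.
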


\begin{proof}
Consider the extension $\widetilde{\Delta}$ of $\Delta|_{M_{sa}}$
on $M$ defined by:
$$
\widetilde{\Delta}(x_1+ix_2)=\Delta(x_1)+i\Delta(x_2),\, x_1,
x_2\in M_{sa}.
$$
Taking into account the homogeneity of $\Delta,$ Lemma~\ref{addi} and
the equality~\eqref{joor} we obtain that $\widetilde{\Delta}$ is a
Jordan derivation on $M$. As we mentioned above  by\cite[Theorem 1]{Bre}   any
Jordan derivation on a semi-prime algebra is a derivation. Since $M$ is
semi-prime $\widetilde{\Delta}$ is a derivation on $M$.
Therefore there exists an element $a\in M$ such that
$\widetilde{\Delta}(x)=ax-xa$ for all $x\in M.$ In particular,
$\Delta(x)=D_a(x) = ax-xa$ for all $x\in M_{sa}.$ The proof is complete.
\end{proof}

Now our aim is to show that if two $2$-local derivations coincide on $M_{sa}$
then they are equal on the whole von Neumann algebra $M$.

Further in Lemmata~\ref{supp}-\ref{inve}  we assume that $\Delta$
is a 2-local derivation on $M$  such that $\Delta|_{M_{sa}}\equiv
0.$

\begin{lem}\label{supp} Let $x\in M$ and let $e, f$ be projections
in $M$ such that $x=exf.$ Then
 \begin{enumerate}
\item[(i)] $e\Delta(x)f=\Delta(x).$
\item[(ii)] if    $ef=0,$ then $\Delta(x)=0.$
 \end{enumerate}
\end{lem}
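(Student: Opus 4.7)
The plan is to prove (i) by two applications of $2$-locality, pairing $x$ with each of $e$ and $f$ in turn, and then to deduce (ii) from (i) by using $x-x^*$ as an auxiliary partner.

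For (i), I would pair $x$ with $e$: by $2$-locality there exists $a\in M$ with $\Delta(x)=[a,x]$ and $\Delta(e)=[a,e]$. Since $e\in M_{sa}$, the standing hypothesis $\Delta|_{M_{sa}}\equiv 0$ forces $\Delta(e)=0$, so $a$ commutes with $e$ and hence with $1-e$. A direct computation gives
\[
(1-e)\Delta(x)=a(1-e)x-(1-e)x\,a=0,
\]
because $(1-e)x=(1-e)exf=0$ kills both terms. Hence $e\Delta(x)=\Delta(x)$. A mirror argument pairing $x$ with $f$ gives $\Delta(x)f=\Delta(x)$, and combining the two yields $\Delta(x)=e\Delta(x)f$.

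For (ii), the key observation is that $i(x-x^*)$ is self-adjoint, so by the hypothesis and homogeneity of $\Delta$ we have $\Delta(x-x^*)=0$. Applying $2$-locality to the pair $(x,\,x-x^*)$ produces $a\in M$ with $\Delta(x)=[a,x]$ and $[a,\,x-x^*]=0$; the latter rearranges to $[a,x]=[a,x^*]$, giving $\Delta(x)=ax^*-x^*a$. Substituting this into the conclusion of (i) yields
\[
\Delta(x)=e\Delta(x)f=eax^*f-ex^*af.
\]
Now $x^*=fx^*e$ combined with $ef=0$ forces $x^*f=fx^*(ef)=0$ and $ex^*=(ef)x^*e=0$, so both summands vanish and $\Delta(x)=0$.

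The only real obstacle is picking the right partner for $x$ in the $2$-local step of (ii): the choice $x-x^*$ is the natural one because $i(x-x^*)\in M_{sa}$, which is the one way to exploit $\Delta|_{M_{sa}}\equiv 0$ while manufacturing an occurrence of $x^*$ rather than $x$. Once this swap $[a,x]\leadsto[a,x^*]$ is in hand, $x$ sits in the $(e,f)$-corner while $x^*$ sits in the disjoint $(f,e)$-corner, and the orthogonality $ef=0$ kills the result via (i). Everything else is routine bookkeeping with the corner identities $ex=x$, $xf=x$, $ex^*=0$, $x^*f=0$.
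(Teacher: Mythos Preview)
Your proof is correct and follows essentially the same strategy as the paper. For (i) the paper applies the Leibniz rule $D(ex)=D(e)x+eD(x)$ directly, while you rewrite the same computation as $(1-e)[a,x]=0$ via $[a,e]=0$; these are equivalent. For (ii) the paper pairs $x$ with the self-adjoint element $y=exf+fx^*e=x+x^*$ rather than your $x-x^*$, but the mechanism is identical: the partner is killed by $\Delta|_{M_{sa}}\equiv 0$, and the resulting identity is then compressed to the $(e,f)$-corner, where the $x^*$-contribution vanishes because $ex^*=x^*f=0$.
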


\begin{proof} $(i)$ Take a derivation $D$ on $M$ such that
$$
\Delta(x)=D(x),\, \Delta(e)=D(e).
$$
Taking into account  $D(e)=\Delta(e)=0$ and $x=ex$  we obtain
$$
\Delta(x)=D(x)=D(ex)=D(e)x+eD(x)=eD(x)=e\Delta(x).
$$
In a similar way  $\Delta(x)=\Delta(x)f.$ Therefore $e\Delta(x)f=\Delta(x).$

$(ii)$ Set $y=exf+fx^\ast e.$ Take an element $a\in M$ such that
$$
\Delta(x)=[a, x],\, \Delta(y)=[a, y].
$$
Since $y$ is self-adjoint, we have $\Delta(y)=0.$ Taking into account the assumption
$exf=x$ by straightforward calculations we obtain that $e[a, y]f=e[a, x]f$. Thus
$$
0=e\Delta(y)f=e[a, y]f=e[a, x]f=e\Delta(x)f=\Delta(x).
$$
 The proof is complete.
\end{proof}

In the next four lemmata $p, q$ are mutually orthogonal equivalent
projections in $M$ with $p+q=\mathbf{1}.$

\begin{lem}\label{dia} If $x=pxq+qxp,$ then
 $\Delta(x)=0.$
 \end{lem}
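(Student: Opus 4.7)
The plan is to decompose $\Delta(x)$ with respect to the corner decomposition $M = pMp + pMq + qMp + qMq$ coming from $p+q=\mathbf{1}$, and to kill each of the four blocks by a separate application of the $2$-local property.

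First I would apply $2$-locality to the pair $(x,p)$. Since $p$ is self-adjoint we have $\Delta(p)=0$, so there exists $a\in M$ with $\Delta(x)=[a,x]$ and $[a,p]=0$. The second equality forces $a$ to commute with both $p$ and $q=\mathbf{1}-p$, so $a = pap+qaq$ is block-diagonal. A direct block multiplication with $x=pxq+qxp$ then shows $[a,x]\in pMq+qMp$; consequently $p\Delta(x)p = q\Delta(x)q = 0$.

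Second, by Lemma~\ref{supp}(ii), $\Delta(pxq)=0$ because $p$ and $q$ are orthogonal. I pair $x$ with $pxq$ to get $b\in M$ satisfying $\Delta(x)=[b,x]$ and $[b,pxq]=0$. Expanding the latter in blocks yields the three relations $(pxq)(qbp)=0$, $(pbp)(pxq)=(pxq)(qbq)$ and $(qbp)(pxq)=0$; substituting these into the $(1,2)$-block of $[b,x]$ makes it vanish, i.e.\ $p\Delta(x)q=0$. A symmetric application with the pair $(x,qxp)$, using that $\Delta(qxp)=0$ by Lemma~\ref{supp}(ii), gives $q\Delta(x)p=0$. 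Combining all four block vanishings yields $\Delta(x)=0$.

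The entire argument reduces to elementary block algebra once one recognises that the three auxiliary elements $p$, $pxq$ and $qxp$ each annihilate a different combination of blocks of $\Delta(x)$; the only non-trivial input is Lemma~\ref{supp}. No serious obstacle is expected, only careful bookkeeping of the $2\times 2$ matrix products. Note that the assumed equivalence of $p$ and $q$ plays no role here and is presumably needed in the subsequent lemmata.
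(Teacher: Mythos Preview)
Your argument is correct and follows essentially the same route as the paper's: pair $x$ with $p$ to kill the diagonal blocks $p\Delta(x)p$ and $q\Delta(x)q$, then with $pxq$ and $qxp$ (using Lemma~\ref{supp}(ii)) to kill the off-diagonal ones. The paper's second step is marginally slicker---it observes directly that $p[a,x]q=p[a,pxq]q$ rather than expanding $[b,pxq]=0$ into block relations---but the substance is identical, and your remark that the equivalence $p\sim q$ is not used here is also correct.
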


\begin{proof} Take a derivation $D$ on $M$ such that
$$
\Delta(x)=D(x),\, \Delta(p)=D(p).
$$
Taking into account $D(p)=\Delta(p)=0$ and $pxp=0$ we have
$$
p\Delta(x)p=pD(x)p=D(pxp)-D(p)xp-xpD(p)=D(0)=0,
$$
i.e. $p\Delta(x)p=0.$
 Likewise $q\Delta(x)q=0.$

Now we set $y=pxq.$ Take an element $a\in M$ such that
$$
\Delta(x)=[a, x],\, \Delta(y)=[a, y].
$$
By Lemma~\ref{supp}~(ii) we have $\Delta(y)=0.$ Further
$$
p\Delta(x)q=p[a, x]q=p[a, y]q=p\Delta(y)q=0,
$$
i.e.
$$
p\Delta(x)q=0.
$$
Similarly $q\Delta(x)p=0.$ Therefore
$$
\Delta(x)=(p+q)\Delta(x)(p+q)=p\Delta(x)p+p\Delta(x)q+q\Delta(x)p+q\Delta(x)q=0.
$$
The proof is complete.
\end{proof}

\begin{lem}\label{fidia} If $x=pxp,$  then
 $\Delta(x)=0.$
 \end{lem}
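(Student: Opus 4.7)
The plan is to show that, for a suitable $a \in M$ arising from a single application of $2$-locality, the commutator $[a_{11}, x]$ vanishes, where $a_{11} = pap$. Then, since $\Delta(x)$ will be shown to equal $[a_{11}, x]$, this gives $\Delta(x) = 0$.

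First I would localize $\Delta(x)$ inside $pMp$. Applying Lemma~\ref{supp}(i) with $e = f = p$ yields $\Delta(x) = p\Delta(x)p$. Consequently, for any $a \in M$ satisfying $\Delta(x) = [a, x]$, using $x = pxp$ and $p + q = \mathbf{1}$ a direct block computation gives
\begin{equation*}
\Delta(x) = p[a, x]p = a_{11}x - xa_{11} = [a_{11}, x].
\end{equation*}
So the task reduces to exhibiting an $a$ for which $a_{11}$ commutes with $x$.

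The mechanism I would use is the equivalence of $p$ and $q$: let $v \in M$ be a partial isometry with $v^*v = p$ and $vv^* = q$, so $v \in qMp$ and $v^* \in pMq$. Consider
\begin{equation*}
y := v^* + vx, \qquad \text{so that } y = pyq + qyp.
\end{equation*}
By Lemma~\ref{dia}, $\Delta(y) = 0$. Applying $2$-locality to the pair $(x, y)$ yields $a \in M$ with $\Delta(x) = [a, x]$ and $[a, y] = 0$. The $pMq$-component of $[a, y] = 0$ reads $a_{11} v^* = v^* a_{22}$; multiplying on the right by $v$ produces $a_{11} = v^* a_{22} v$, call this element $c$. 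The $qMp$-component reads $a_{22} v x = v x a_{11}$; multiplying on the left by $v^*$ produces $cx = x a_{11}$. Combining these two, $a_{11} x = c x = x a_{11}$, so $[a_{11}, x] = 0$ and therefore $\Delta(x) = 0$.

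The main obstacle I anticipate is choosing the correct auxiliary element. Pairing $x$ with $v^*$ alone produces only $a_{11} = c$, while pairing $x$ with $vx$ alone produces only $cx = xa_{11}$, and since $2$-locality furnishes different $a$'s for different pairs, these two relations from separate applications cannot be combined. The clever step is that the single off-diagonal element $y = v^* + vx$ extracts both relations from one application of $2$-locality, which is precisely what is needed to force the commutation $[a_{11}, x] = 0$.
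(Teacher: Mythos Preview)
Your proof is correct and takes a route that differs in one key step from the paper's. Both arguments begin by localizing $\Delta(x)$ in $pMp$ via Lemma~\ref{supp}(i), and both construct an off-diagonal auxiliary element $y\in pMq+qMp$ from a partial isometry implementing $p\sim q$, so that Lemma~\ref{dia} yields $\Delta(y)=0$. From there the paper invokes the Jordan identity $\Delta(y^2)=\Delta(y)y+y\Delta(y)=0$, notes that $y^2=x+uxu^*$ is block-diagonal with $p$-block exactly $x$, and then applies $2$-locality to the pair $(x,y^2)$ to conclude $\Delta(x)=p\Delta(x)p=p[a,y^2]p=p\Delta(y^2)p=0$. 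You instead apply $2$-locality directly to the pair $(x,y)$ and read off the two off-diagonal blocks of the equation $[a,y]=0$: one gives $a_{11}=v^*a_{22}v$, the other gives $(v^*a_{22}v)x=xa_{11}$, and together these force $[a_{11},x]=0$, hence $\Delta(x)=[a_{11},x]=0$. Your argument trades the Jordan identity for a short block computation; the paper's version lets the block structure of $y^2$ do the work after one appeal to~\eqref{joor}. Both are clean and of comparable length.
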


\begin{proof} Take a partial isometry $u$ in $M$ such that
$$
u^\ast u=p,\, uu^\ast =q.
$$
 Set $y=xu^\ast +u.$ Since
$$
xu^\ast +u=pxu^\ast q +qup,
$$
we have that  $y=pyq+qyp$ and
Lemma~\ref{dia} implies that  $\Delta(y)=0.$
 Thus
$\Delta(y^2)=\Delta(y)y+y\Delta(y)=0.$ Take an element $a\in M$
such that
$$
\Delta(x)=[a, x],\, \Delta(y^2)=[a, y^2].
$$
It is easy to see that
$$
y^2=(xu^\ast +u)^2=(pxu^\ast q +qup)^2=x+uxu^\ast=pxp+quxu^\ast
q,
$$
therefore $p[a,y^2]p=p[a,x]p$. Now from Lemma~\ref{supp}~(i) we
obtain
$$
\Delta(x)=p\Delta(x)p=p[a, x]p=p[a, y^2]p=p\Delta(y^2)p=0,
$$
i.e.
$$
\Delta(x)=0.
$$
 The proof is complete.
\end{proof}

\begin{lem}\label{inv} Let  $x$ be an element such that $x=pxp$ and let $a\in M$ be an element
such that $\Delta(x)=[a, x].$ If $x$ is invertible in $pMp$  then
 $a=pap+q a q.$
 \end{lem}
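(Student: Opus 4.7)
The plan is to reduce the statement to a block matrix calculation with respect to the decomposition $\mathbf{1}=p+q$, and then use the invertibility of $x$ in $pMp$ to knock out the off-diagonal blocks of $a$.

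First, I would observe that since $x=pxp$, Lemma~\ref{fidia} gives $\Delta(x)=0$. Combined with the hypothesis $\Delta(x)=[a,x]$, this yields
$$
ax = xa.
$$
The whole problem is now to extract from this commutation relation the information that $paq=0$ and $qap=0$, since together with $p+q=\mathbf{1}$ these equalities give exactly $a=pap+qaq$.

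Next I would write $a$ in its $2\times2$ block form relative to $p$ and $q$, namely $a=pap+paq+qap+qaq$, and note that $x=pxp$ has only a $(1,1)$-block. Multiplying out, the off-diagonal blocks of $ax$ and $xa$ are
$$
qapx = q a p \cdot pxp = \text{(the $(2,1)$-block of } ax\text{)}, \qquad
xpaq = pxp \cdot paq = \text{(the $(1,2)$-block of } xa\text{)},
$$
while the opposite off-diagonal blocks vanish because $x=pxp$. So $ax=xa$ forces $qap\cdot x=0$ and $x\cdot paq=0$.

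Finally, I would invoke the hypothesis that $x$ is invertible in $pMp$: let $y\in pMp$ satisfy $xy=yx=p$. Right-multiplying $qap\cdot x=0$ by $y$ gives $qap\cdot p=qap=0$, and similarly left-multiplying $x\cdot paq=0$ by $y$ gives $p\cdot paq=paq=0$. Therefore $a=pap+qaq$, as required. The main (and rather mild) obstacle is simply keeping the block bookkeeping straight; there is no analytic difficulty because $\Delta(x)=0$ is already delivered by Lemma~\ref{fidia}, and the invertibility in the corner $pMp$ handles the algebraic step cleanly.
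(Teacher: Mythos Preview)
Your proof is correct and follows essentially the same approach as the paper's: both use Lemma~\ref{fidia} to get $[a,x]=0$, then read off from the $(1,2)$- and $(2,1)$-blocks of this identity (relative to $p+q=\mathbf{1}$) that $x\cdot paq=0$ and $qap\cdot x=0$, and finally cancel $x$ using its inverse in $pMp$. The only difference is cosmetic---you organize the computation via an explicit block decomposition of $a$, whereas the paper computes $p[a,x]q$ and $q[a,x]p$ directly---but the underlying argument is identical.
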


\begin{proof} By Lemma~\ref{fidia} $\Delta(x)=0,$ and therefore  $p[a,
x]q=0.$  Replacing $x$ by $pxp$ we obtain $pxppaq=0.$ Since
$x=pxp$ is invertible in $pMp$ , i.e. $bpxp =p$ for an appropriate
$b \in pMp$, multiplying the last equality from the left side by
this $b$ we have $paq=0.$ Similarly $q ap=0.$ Thus
$$
a=(p+q)a(p+q)=pap+qaq.
$$ The proof is complete. \end{proof}

\begin{lem}\label{inve}
 Let $x$ be an arbitrary element in $M$. Then $p\Delta(x)p=0.$
 \end{lem}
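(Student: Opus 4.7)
The plan is to test $\Delta$ against $x$ via the 2-locality property applied to the pair $(x,y)$ for a cleverly chosen $y$. I would take $y := pxp + \lambda p$ where $\lambda$ is any scalar with $|\lambda| > \|pxp\|$ (so that $-\lambda \notin \sigma_{pMp}(pxp)$ and hence $y$ is invertible in the subalgebra $pMp$).

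First, since $y = pyp$, Lemma~\ref{fidia} gives $\Delta(y)=0$. By the 2-locality of $\Delta$ applied to the pair $(x,y)$, there exists $a\in M$ with
\[
\Delta(x) = [a,x], \qquad \Delta(y) = [a,y] = 0.
\]
Because $y$ is invertible in $pMp$, Lemma~\ref{inv} (applied with $y$ playing the role of $x$) forces $a = pap + qaq$, i.e.\ $a$ is block-diagonal with respect to the decomposition $\mathbf{1} = p+q$.

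Next, with this block-diagonal structure a direct expansion gives $p[a,x]p = [pap,\,pxp]$, since $p\,a = pap$ and $a\,p = pap$. Compressing the identity $[a,y] = 0$ by $p$ on both sides yields $[pap,\,pxp + \lambda p] = 0$, and because $\lambda p$ is central in $pMp$ this collapses to $[pap,\,pxp] = 0$. Therefore
\[
p\Delta(x)p \;=\; p[a,x]p \;=\; [pap,\,pxp] \;=\; 0,
\]
which is the desired conclusion.

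The main obstacle is selecting a single $y$ that simultaneously (a) satisfies $\Delta(y)=0$ so that 2-locality produces a useful commutator equation, (b) has $y = pyp$ so that Lemma~\ref{fidia} supplies (a), and (c) is invertible in $pMp$ so that Lemma~\ref{inv} can eliminate the off-diagonal blocks of $a$. The obvious candidate $y = pxp$ meets (a) and (b) but in general fails (c): without invertibility, $[a,pxp] = 0$ only yields $[pap,pxp] = 0$ together with weak constraints $pxp\cdot a_2 = 0$ and $a_3\cdot pxp = 0$ that leave residual cross-terms $a_2\,qxp - pxq\,a_3$ in the $(1,1)$ corner of $[a,x]$. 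Translating $pxp$ by the scalar $\lambda p$ for large $\lambda$ is the trick that repairs invertibility without disturbing the form $y = pyp$; once this is in place the remaining computation is a routine compression argument.
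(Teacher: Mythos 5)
Your proof is correct and follows essentially the same route as the paper: both arguments produce an auxiliary element supported in $pMp$ and invertible there, invoke Lemma~\ref{inv} to make the implementing element $a$ block-diagonal, and then identify $p\Delta(x)p=p[a,x]p$ with a compressed commutator that vanishes by Lemma~\ref{fidia}. The only difference is that you shift $pxp$ by $\lambda p$ inside $pMp$ instead of replacing $x$ by $x-\mathbf{1}$ (after that replacement the paper's auxiliary element is exactly $pxp-p$, i.e.\ your $y$ with $\lambda=-1$), which lets you skip the paper's preliminary normalization $\|x\|<1$ and the step showing $\Delta(x-\mathbf{1})=\Delta(x)$.
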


\begin{proof} Since $\Delta$ is homogeneous, we can assume that
$\| x\|<1.$ Take a derivation $D$ on  $M$ such that
$$
\Delta(x)=D(x),\, \Delta(x-\mathbf{1})=D(x-\mathbf{1}).
$$
Then
$$
\Delta(x-\mathbf{1})=D(x-\mathbf{1})=D(x)-D(\mathbf{1})=
D(x)=\Delta(x),
$$
i.e.
$$
\Delta(x-\mathbf{1})=\Delta(x).
$$
Thus  replacing, if necessary, $x$ by  $x-\mathbf{1},$ we may assume
that $x$ is invertible. In particular, $pxp$ is also invertible in
$pMp.$

Now take an element $a\in M$ such that
$$
\Delta(x)=[a, x],\, \Delta(pxp)=[a, pxp].
$$
Since $pxp$ is  invertible in $pMp,$ by Lemma~\ref{inv} we have
$a=pap+q a q.$ Further
$$
p\Delta(x)p=p[a, x]p=p[a, pxp]p=p\Delta(pxp)p=0,
$$
where the last equality follows from Lemma~\ref{fidia} applied to the element $pxp$.
Therefore
$$
p\Delta(x)p=0.
$$
  The proof is
complete.
\end{proof}

\begin{lem}\label{nul}
 Let $M$ be a type $III$ von Neumann algebra and let
$y\in M$ be an arbitrary element. If  $pyp=0$ for all
projection with $p\sim \textbf{1}-p,$ then $y=0.$
 \end{lem}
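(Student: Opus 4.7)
The plan is to use that a type $III$ von Neumann algebra has properly infinite identity, so $\mathbf{1}$ admits a halving. Fix $\mathbf{1} = p + q$ with $p \sim q$, implemented by a partial isometry $v \in M$ satisfying $v^\ast v = p$ and $vv^\ast = q$; automatically $v^2 = 0 = (v^\ast)^2$. Since $p$ and $q$ are themselves halving, the hypothesis gives $pyp = qyq = 0$, whence $y = a + b$ with $a := pyq \in pMq$ and $b := qyp \in qMp$. It therefore suffices to prove $a = b = 0$.

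The next move is to produce a continuous family of halving projections from this single halving. For each $c \in \mathbb{C}$ with $|c| = 1$, set $u_c := cv + \bar c v^\ast$. The identities $v^2 = 0 = (v^\ast)^2$ and $vv^\ast + v^\ast v = \mathbf{1}$ make $u_c$ a self-adjoint unitary, so $p_c := \tfrac{1}{2}(\mathbf{1}+u_c)$ is a projection. The self-adjoint unitary $w := 2p - \mathbf{1}$ satisfies $wvw = -v$ and $wv^\ast w = -v^\ast$ (immediate from $pv = 0$, $vp = v$ and their adjoints), so $wu_c w = -u_c$ and hence $wp_c w = \mathbf{1} - p_c$. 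Thus $p_c$ and $\mathbf{1} - p_c$ are unitarily, and hence Murray--von Neumann, equivalent: $p_c$ is halving for every $c$ with $|c|=1$.

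Applying the hypothesis to $p_c$ and to $\mathbf{1} - p_c$ and adding yields $p_c y p_c + (\mathbf{1}-p_c) y (\mathbf{1}-p_c) = 0$, which expands to $u_c y u_c = -y$. Substituting $y = a+b$ and using the annihilation relations $vq = 0$, $pv = 0$, $qv^\ast = 0$, $v^\ast p = 0$ shows that the mixed terms $vyv^\ast$ and $v^\ast y v$ vanish, leaving
\[
u_c y u_c \;=\; c^2\, vyv \;+\; \bar c^2\, v^\ast y v^\ast,
\]
with $vyv \in qMp$ and $v^\ast y v^\ast \in pMq$. Matching this against $-y = -a - b$ corner by corner in the $2 \times 2$ block picture coming from $\mathbf{1} = p + q$ reduces everything to the single scalar relation $b_0 = -c^2 a_0$, where $a_0, b_0$ are the off-diagonal entries representing $a$ and $b$. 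Taking $c = 1$ and $c = i$ yields the incompatible demands $b_0 = -a_0$ and $b_0 = a_0$, which force $a_0 = b_0 = 0$, and therefore $y = 0$.

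The genuine content is the construction of the circle $\{p_c\}_{|c|=1}$ of halving projections out of a single halving of the identity: two members of this family (at $c = 1$ and $c = i$) already furnish the two independent conditions needed to force $y = 0$. The type~$III$ hypothesis enters only through the proper infiniteness of $\mathbf{1}$; everything else is a direct corner computation.
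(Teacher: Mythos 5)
Your proof is correct, and it takes a genuinely different, more elementary route than the paper's. The paper first reduces to self-adjoint $y$, then uses the type $III$ assumption to upgrade the hypothesis from halving projections to all projections with $p\preceq \mathbf{1}-p$ (by halving the infinite remainder $\mathbf{1}-f-p$ and enlarging $p$ to $e=p+f_1\sim\mathbf{1}-e$), and finally invokes the Kadison--Ringrose result that a maximal abelian $\ast$-subalgebra of an algebra with no abelian central summands contains a projection $e$ with $c(e)=c(\mathbf{1}-e)=\mathbf{1}$ and $e\preceq\mathbf{1}-e$; applied to a maximal abelian subalgebra of $s(y)Ms(y)$ containing $y$, this yields a nonzero $p\leq s(y)$ commuting with $y$ with $py=pyp=0$, a contradiction. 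You instead fix one halving $\mathbf{1}=p+q$ implemented by $v$ and manufacture the circle of halving projections $p_c=\tfrac12(\mathbf{1}+cv+\bar c v^\ast)$, each conjugate to its complement by the self-adjoint unitary $w=2p-\mathbf{1}$; the resulting identity $u_cyu_c=-y$, expanded in the $2\times 2$ corner picture, leaves only the off-diagonal parts (the diagonal ones vanish because $p$ and $q$ are themselves halving), and both corner equations reduce to $v^\ast b=-c^2\,av$, so the choices $c=1$ and $c=i$ force $a=b=0$. All the computational steps check out: $u_c$ is a self-adjoint unitary since $v^2=(v^\ast)^2=0$ and $v^\ast v+vv^\ast=\mathbf{1}$; $wp_cw=\mathbf{1}-p_c$ does give Murray--von Neumann equivalence; and the mixed terms $vyv^\ast$, $v^\ast yv$ vanish as claimed. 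Your argument buys several things: it avoids the support projection, central covers and the MASA theorem altogether, uses only finitely many halving projections, and establishes the lemma under the weaker hypothesis that $\mathbf{1}$ is properly infinite (hence halvable), which covers the type $III$ case needed in the paper and more; the paper's argument, by contrast, exploits the full family of projections with $p\preceq\mathbf{1}-p$ and standard structure theory, but yields no stronger conclusion here.
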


\begin{proof} Let $y= y_1+i y_2$ be the decomposition of $y$ into
self-adjoint components and let $p$ be a projection with $p\sim
\textbf{1}-p.$ Then $py_1p+ipy_2p=0.$ Passing to the adjoint in
this equality we obtain $py_1p-ipy_2p=0.$ Thus $py_1p=0$ and
$py_2p=0.$ Therefore it is suffices to consider  the case where $y$ is a
self-adjoint element.

Let us show that $pyp=0$ for all projection with $p\preceq
\textbf{1}-p.$  Take an arbitrary projection $f\leq \mathbf{1}-p$
with $f\sim p.$ If $f+p=\mathbf{1}$ then $p\sim\mathbf{1}-p,$ and
therefore $pyp=0.$  Suppose that $f+p\neq \mathbf{1}.$ Since $M$
is a type $III$ von Neumann algebra, $\mathbf{1}-f-p$ is infinite.
Therefore there exist mutually orthogonal equivalent projections
$f_1, f_2$ such that $f_1+f_2=\mathbf{1}-f-p.$ In this case we put
$e=p+f_1.$ Then it is clear that $e\sim \mathbf{1}-e,$ and by the assumption of the
lemma $eye=0.$ Multiplying from the both sides by $p$ we obtain
that $pyp=0.$

Now suppose that $y\neq0$ and consider the support $s(y)$ of the
element  $y,$ i.e.
$$
s(y)=\mathbf{1}-\sup\{e\in P(M): ey=0\}.
$$
Let $A$ be an arbitrary maximal abelian $\ast$-subalgebra in
$s(y)Ms(y)$ containing the self-adjoint element $y.$
It is known (see\cite[Problem
6.9.19]{KR}) that if $\mathcal{R}$ is a von Neumann algebra with no abelian
central summands, and  $A$ is a maximal abelian
$\ast$- subalgebra of $\mathcal{R},$ then
$A$ contains a projection $e$ such that $c(e) =
c(\mathbf{1}-e)=\mathbf{1}$ and $e\preceq \mathbf{1}-e,$ where $c(e)$ denotes the central cover of the
projection $e$.
Applying this assertion to the von Neumann algebra $\mathcal{R}= s(y)Ms(y)$ we obtain that
 there exists a non zero projection $p$ in $A$ such
that $p\preceq s(y)-p\leq\mathbf{1}-p.$ Then from above we have $ py=pyp=0. $ This
contradicts with $0\neq p\leq s(y).$ From this contradiction we
obtain that $y=0.$
 The proof is complete.
\end{proof}

Since the projection $p$  (assumed that $p\sim q=\mathbf{1} - p$ )
in Lemma~\ref{inve} is arbitrary, from Lemma \ref{nul} we obtain
the following result.

\begin{lem}\label{jorr}
Let $M$ be a type $III$ von Neumann algebra. If
$\Delta|_{M_{sa}}\equiv 0$ then  $\Delta\equiv 0.$
\end{lem}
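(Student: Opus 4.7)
The plan is to observe that this lemma is an immediate synthesis of Lemmas~\ref{inve} and~\ref{nul}, with essentially no new work required; the content lies in verifying that their hypotheses fit together. Fix an arbitrary element $x \in M$ and set $y = \Delta(x)$; the goal is to prove $y = 0$.

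First, I would pick any projection $p \in M$ with $p \sim \mathbf{1}-p$ and let $q = \mathbf{1}-p$. Then $p$ and $q$ are mutually orthogonal equivalent projections with $p+q = \mathbf{1}$, which is precisely the standing assumption preceding Lemma~\ref{inve}. That lemma, applied to our $x$, yields $p\Delta(x)p = 0$, i.e. $pyp = 0$. Since $p$ was an arbitrary projection equivalent to its orthogonal complement, this gives $pyp = 0$ for every projection $p$ with $p \sim \mathbf{1}-p$.

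Second, I would apply Lemma~\ref{nul} to the element $y = \Delta(x)$: the hypothesis of that lemma is exactly what we have just established, and it requires the type~$III$ hypothesis on $M$ (which is where the type~$III$ assumption enters the argument). The conclusion is $y = 0$, that is $\Delta(x) = 0$. Since $x \in M$ was arbitrary, $\Delta \equiv 0$.

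There is no substantive obstacle, since both ingredients are already in hand. The only point worth checking carefully is that the phrasing "mutually orthogonal equivalent projections $p,q$ with $p+q = \mathbf{1}$" used to set up Lemma~\ref{inve} is equivalent to the quantification "projection $p$ with $p \sim \mathbf{1}-p$" appearing in Lemma~\ref{nul}; both simply say that $p$ admits an orthogonal complement equivalent to itself, so the two statements compose without loss.
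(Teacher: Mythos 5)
Your proposal is correct and is exactly the argument the paper itself gives: Lemma~\ref{jorr} is stated as an immediate consequence of Lemma~\ref{inve} (applied to an arbitrary projection $p\sim\mathbf{1}-p$) combined with Lemma~\ref{nul}, which is where the type $III$ hypothesis enters. Your check that the two quantifications over $p$ match is the only point of substance, and you handled it correctly.
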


\begin{rem} \label{rem}
Let $e$ be a central projection in $M$. Since $D(e)=0$ for an
arbitrary derivation $D$, it is clear that $\Delta(e)=0$ for any
$2$-local derivation $\Delta$ on $M$. Take $x\in M$ and let $D$ be
a derivation on $M$ such that $\Delta(ex)=D(ex), \Delta(x)=D(x)$.
Then we have $\Delta(ex)=D(ex)=D(e)x+eD(x)=e\Delta(x)$. This means
that every $2$-local derivation $\Delta$ maps $eM$ into $eM$ for
each central projection $e\in M$, i.e. if necessary, we may
consider the restriction of $\Delta$ onto $eM$.
\end{rem}

\textit{Proof of the Theorem \ref{main}}. By
\cite[Theorem]{AA2} any $2$-local derivation on a semi-finite von
Neumann algebra is a derivation, therefore the above Remark implies that it is
sufficient to consider only a type $III$ von Neumann algebra $M$. By
Lemma~\ref{jor} there exists an element $a\in M$ such that
 $\Delta(x)=D_a(x)= ax-xa$ for all
$x\in M_{sa}.$ Consider the $2$-local derivation $\Delta-D_a.$ Since
$(\Delta-D_a)|_{M_{sa}}\equiv 0,$ Lemma~\ref{jorr} implies that
$\Delta=D_a.$ The proof is complete.

\end{document}